\newtheorem{theorem}{Theorem}[section]
\newtheorem{lemma}[theorem]{Lemma}
\newtheorem{proposition}[theorem]{Proposition}
\newtheorem{corollary}[theorem]{Corollary}
\theoremstyle{definition}
\newtheorem{example}[theorem]{Example}
\newtheorem{algorithm}[theorem]{Algorithm}
\theoremstyle{remark}
\numberwithin{equation}{section}
\begin{document}

%\title[Atomic numerical semigroups with fixed Frobenius number]{On the enumeration of the set of atomic numerical semigroups with fixed Frobenius number}
\title{On the enumeration of the set of atomic numerical semigroups with fixed Frobenius number}%[Atomic numerical semigroups with fixed Frobenius number]

\author{Aureliano M. Robles-P\'erez\thanks{Both authors are supported by the project MTM2014-55367-P, which is funded by Mi\-nis\-terio de Econom\'{\i}a y Competitividad and Fondo Europeo de Desarrollo Regional FEDER, and by the Junta de Andaluc\'{\i}a Grant Number FQM-343. The second author is also partially supported by the Junta de Andaluc\'{\i}a/Feder Grant Number FQM-5849.} \thanks{Departamento de Matem\'atica Aplicada, Universidad de Granada, 18071-Granada, Spain. \newline E-mail: {\bf arobles@ugr.es}}
	\mbox{ and} Jos\'e Carlos Rosales$^*$\thanks{Departamento de \'Algebra, Universidad de Granada, 18071-Granada, Spain. \newline E-mail: {\bf jrosales@ugr.es}} }

\date{ }

\maketitle

\begin{abstract}
	A numerical semigroup is irreducible if it cannot be obtained as intersection of two numerical semigroups containing it properly. If we only consider numerical semigroups with the same Frobenius number, that concept is generalized to atomic numerical semigroup. Based on a previous one developed to obtain all irreducible numerical semigroups with a fixed Frobenius number, we present an algorithm to compute all atomic numerical semigroups with a fixed Frobenius number.
\end{abstract}
\noindent {\bf Keywords:} Irreducible numerical semigroups, atomic numerical semigroups, atoms, Kunz-coordinates.

\medskip

\noindent{\it 2010 AMS Classification:} 20M14, 11Y16.

\section{Introduction}

Let $\mathbb{Z}$ and $\mathbb{N}$ be the set of integers and non-negative integers, respectively. A \emph{numerical semigroup} is a subset $S$ of $\mathbb{N}$ that is closed under addition, $0 \in S$, and $\mathbb{N} \setminus S$ is finite. We denote by $\mathcal{S}$ the set formed by all numerical semigroups. 

The greatest integer that does not belong to $S$ is the so-called \emph{Frobenius number of $S$} and it is denoted by $\mathrm{F}(S)$. If $F$ is a positive integer, then we denote by $\mathcal{S}(F) = \left\{ S \in \mathcal{S} \mid \mathrm{F}(S)=F \right\}$.

In \cite{pacific} it was introduced the notion of \emph{irreducible numerical semigroup}: a numerical semigroup is irreducible if it cannot be expressed as intersection of two numerical semigroups containing it properly. In \cite{pacific} it was also proven that a numerical semigroup $S$ is irreducible if and only if $S$ is maximal (with respect to the inclusion order) in the set $\mathcal{S}\big(\mathrm{F}(S)\big)$. Moreover, from \cite{barucci} (respectively, \cite{froberg}), we deduce that the class of irreducible numerical semigroups with even (respectively, odd) Frobenius number is just the class of pseudo-symmetric (respectively, symmetric) numerical semigroups. Let us recall that the motivation of symmetric numerical semigroups was the classical Kunz's paper (\cite{kunz}), where it was shown that a one-dimensional analytically irreducible Noetherian local ring is Gorenstein if and only if its value semigroup is symmetric. As a generalization of this result, in \cite{barucci} was shown that a numerical semigroup is pseudo-symmetric if and only if its semigroup ring is a Kunz ring.

Let us observe that $\left(\mathcal{S},\cap\right)$ is a semigroup and $\mathcal{I} = \left\{S \in \mathcal{S} \mid S \mbox{ is irreducible} \right\}$ is a system of generators of such a semigroup. In fact, it is well known (see \cite[Proposition~4.44]{springer}) that every numerical semigroup can be expressed as the intersection of finitely many irreducible numerical semigroups. By this reason, it is interesting to have an algorithm to compute $\mathcal{I}$. Thus, in \cite{forum} it was given an algorithm process which allows us to compute $\mathcal{I}(F) = \mathcal{I} \cap \mathcal{S}(F)$, which is obviously a finite set.

Following the notation of \cite{belga}, we say that $S\in \mathcal{S}(F)$ is an \emph{atom of $\mathcal{S}(F)$} if it cannot be expressed as the intersection of two elements of $\mathcal{S}(F)$ containing it properly. Thus, a numerical semigroup $S$ is an \emph{atomic numerical semigroup} if it is an atom of $\mathcal{S}\big(\mathrm{F}(S)\big)$. Let us observe that, if $S \in \mathcal{S}(F)$ is an irreducible numerical semigroup, then $S$ is atomic, that is, $S$ is an atom of $\mathcal{S}(F)$. However, if $S$ is an atomic numerical semigroup, then $S$ is not necessarily irreducible (see \cite[Example~26]{belga}).
 
It is clear that, if $F$ is a positive integer, then $\left(\mathcal{S}(F),\cap\right)$	is a semigroup. Moreover, the set $\mathcal{A}(F) = \left\{ S\in \mathcal{S}(F) \mid S \mbox{ is atomic} \right\}$ is a system of generators of such a semigroup. Therefore, as a generalization of the algorithm given in \cite{forum}, it would be interesting to  show an algorithmic process in order to determine, from a positive integer $F$, the set $\mathcal{A}(F)$. That is precisely the main purpose of this work. By the way, such as was observed in \cite{belga}, $\mathcal{I}(F)$ is not a system of generators of $\left(\mathcal{S}(F),\cap\right)$.

\section{ANI-semigroups}\label{irreducible-ani}

We propose to characterize the atomic numerical semigroups which are not irreducible, that is, the so-called \emph{$\mathrm{ANI}$-semigroups} (see \cite{linear}).

Let $S$ be a numerical semigroup. We denote by
$$\mathrm{E}(S) = \big\{x \in \mathbb{N} \setminus S \mid 2x\in S \mbox{ and } x+s\in S \mbox{ for all } s\in S\setminus \{0\} \big\}.$$ 
The following result has an easy proof and shows an interpretation of the elements of $\mathrm{E}(S)$, which are called \emph{special gaps} of $S$ (see \cite{springer}).

\begin{lemma}\label{lem1}
	Let $S$ be a numerical semigroup and let $x\in \mathbb{N} \setminus S$. Then $x\in \mathrm{E}(S)$ if and only if $S\cup \{x\}$ is a numerical semigroup.
\end{lemma}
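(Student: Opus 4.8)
The plan is to prove the biconditional by directly unwinding the definition of a numerical semigroup (closed under addition, contains $0$, finite complement) on one side and the definition of $\mathrm{E}(S)$ on the other. Write $T = S \cup \{x\}$. The finiteness of $\mathbb{N} \setminus T$ and the membership $0 \in T$ are immediate from the corresponding facts about $S$, regardless of direction, so the entire content of the lemma lives in the closure-under-addition condition and how it interacts with the two defining requirements of $\mathrm{E}(S)$, namely $2x \in S$ and $x + s \in S$ for all $s \in S \setminus \{0\}$.

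For the forward implication, I would assume $x \in \mathrm{E}(S)$ and verify that $T$ is closed under addition by a short case analysis on a sum $a + b$ with $a, b \in T$. If $a, b \in S$, closure of $S$ gives $a + b \in S \subseteq T$. If exactly one of them equals $x$, say $a = x$ and $b \in S$, then either $b = 0$ (so $a + b = x \in T$) or $b \in S \setminus \{0\}$, and the second defining condition of $\mathrm{E}(S)$ yields $x + b \in S \subseteq T$. Finally, if $a = b = x$, the first condition gives $a + b = 2x \in S \subseteq T$. This exhausts the cases and shows $T$ is a numerical semigroup.

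For the reverse implication, I would assume $T = S \cup \{x\}$ is a numerical semigroup and recover the two conditions defining $\mathrm{E}(S)$ from closure of $T$. Since $x \in T$, closure gives $2x = x + x \in T = S \cup \{x\}$, and likewise $x + s \in T$ for each $s \in S \setminus \{0\} \subseteq T$. The one point requiring care, and the only place the argument is not purely formal, is ruling out the degenerate alternative that these sums land on $x$ rather than in $S$: here I would invoke that $x \neq 0$ (because $0 \in S$ while $x \notin S$), so $2x = x$ is impossible, forcing $2x \in S$; and that $s \neq 0$, so $x + s = x$ is impossible, forcing $x + s \in S$. Hence $x \in \mathrm{E}(S)$.

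I do not expect any genuine obstacle; consistent with the remark that this result has an easy proof, the only subtlety is the bookkeeping in the reverse direction to exclude the degenerate equalities $2x = x$ and $x + s = x$, which is dispatched by the observation $x \neq 0$ together with $s \neq 0$.
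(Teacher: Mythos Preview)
Your argument is correct. The paper does not actually supply a proof of this lemma---it merely asserts that the result ``has an easy proof''---and your case analysis is precisely the routine verification that remark alludes to.
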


Let $X$ be a set. We denote by $\#X$ the cardinality of $X$. Let us observe that $\#\mathrm{E} (\mathbb{N})=0$ and that $\#\mathrm{E}(S)\geq1$ for all numerical semigroup $S\not= \mathbb{N}$ (since $\mathrm{F}(S) \in \mathrm{E}(S)$).

The next result is a reformulation of Lemma~2 and Proposition~5 in \cite{linear}.

\begin{proposition}\label{prop2}
	Let $S$ be a numerical semigroup. Then $S$ is atomic if and only if $\#\mathrm{E}(S)\leq2$. Moreover, $S$ is irreducible if and only if $\#\mathrm{E}(S) \in \{0,1\}$.
\end{proposition}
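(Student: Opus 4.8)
The plan is to reduce everything to a single structural observation: for a numerical semigroup $S \neq \mathbb{N}$ with Frobenius number $F$, an element $x \in \mathrm{E}(S)$ with $x \neq F$ is exactly an element we may adjoin to $S$ while remaining inside $\mathcal{S}(F)$. Indeed, if $x \in \mathrm{E}(S)$ then $S \cup \{x\}$ is a numerical semigroup by Lemma~\ref{lem1}, and since every gap is at most $F$, the condition $x \neq F$ forces $x < F$, so $F$ is still the largest gap of $S \cup \{x\}$ and $\mathrm{F}(S \cup \{x\}) = F$. Conversely, and this is the step I expect to carry the argument, I would show that whenever $S \subsetneq S'$ with $S' \in \mathcal{S}(F)$, the largest element $a$ of the nonempty set $S' \setminus S$ lies in $\mathrm{E}(S)$ and satisfies $a \neq F$: since $S'$ is closed under addition, $2a$ and $a + s$ (for $s \in S \setminus \{0\}$) lie in $S'$ and strictly exceed $a$, hence by maximality of $a$ they lie in $S$, giving $a \in \mathrm{E}(S)$; and $a \in S'$ while $F \notin S'$ forces $a \neq F$.

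For the ``moreover'' clause I would invoke the characterization recalled in the introduction that $S$ is irreducible if and only if it is maximal in $\mathcal{S}(F)$. By the observation above, $S$ fails to be maximal precisely when there is some $x \in \mathrm{E}(S) \setminus \{F\}$ (the forward implication adjoins such an $x$; the reverse extracts $a = \max(S' \setminus S)$). Hence $S$ is irreducible if and only if $\mathrm{E}(S) \subseteq \{F\}$, i.e.\ $\#\mathrm{E}(S) \in \{0,1\}$, the value $0$ occurring only for $S = \mathbb{N}$.

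For the atomicity equivalence I would argue the two directions separately. If $\#\mathrm{E}(S) \geq 3$ then $\mathrm{E}(S)$ contains $F$ together with at least two further gaps $x_1 \neq x_2$; setting $S_1 = S \cup \{x_1\}$ and $S_2 = S \cup \{x_2\}$ produces two members of $\mathcal{S}(F)$ each properly containing $S$, and since $x_1 \notin S_2$ and $x_2 \notin S_1$ one checks at once that $S_1 \cap S_2 = S$, so $S$ is not atomic. Conversely, suppose $\#\mathrm{E}(S) \leq 2$ and that $S = S_1 \cap S_2$ with $S \subsetneq S_i \in \mathcal{S}(F)$; then $a = \max(S_1 \setminus S)$ and $b = \max(S_2 \setminus S)$ both belong to $\mathrm{E}(S) \setminus \{F\}$ by the structural observation. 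If $\#\mathrm{E}(S) = 1$ this set is empty, an immediate contradiction; if $\#\mathrm{E}(S) = 2$ it has a single element, forcing $a = b$, whence this common value lies in $S_1 \cap S_2 = S$, contradicting that it is a gap. Thus no such decomposition exists and $S$ is atomic.

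The one genuinely load-bearing step is the structural observation in the first paragraph, and in particular the claim that $\max(S' \setminus S)$ is a special gap; everything else is bookkeeping, namely the disjointness underlying $S_1 \cap S_2 = S$ and the counting of special gaps modulo $F$. I would emphasize that the explicit decomposition $S = (S \cup \{x_1\}) \cap (S \cup \{x_2\})$ is what makes the ``not atomic'' direction painless, since it avoids any search among all larger semigroups in $\mathcal{S}(F)$.
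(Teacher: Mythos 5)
Your proof is correct, but there is nothing in the paper to compare it against line by line: Proposition~\ref{prop2} is not proved there at all, being quoted as a reformulation of Lemma~2 and Proposition~5 of \cite{linear}. Your argument is a legitimate self-contained replacement, and its engine is the right one: the two-way correspondence between special gaps $x \neq \mathrm{F}(S)$ and proper extensions of $S$ inside $\mathcal{S}(\mathrm{F}(S))$ --- one direction via the unitary extension $S \cup \{x\}$ given by Lemma~\ref{lem1}, the other via your observation that $\max(S' \setminus S)$ is a special gap distinct from $F$ whenever $S \subsetneq S' \in \mathcal{S}(F)$. This is exactly the mechanism that drives the proofs in \cite{linear} (and reappears in the paper in Lemma~\ref{lem9}), so in spirit you have reconstructed the cited argument rather than found a new route; what your version buys is self-containedness, since beyond Lemma~\ref{lem1} you invoke only the maximality characterization of irreducibility recalled in the introduction, and the explicit decomposition $S = (S \cup \{x_1\}) \cap (S \cup \{x_2\})$ makes the non-atomicity direction constructive. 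Two trivial points you should state rather than leave implicit: $\max(S' \setminus S)$ exists because $S' \setminus S \subseteq \mathbb{N} \setminus S$ is finite and nonempty; and in the atomicity direction the case $\#\mathrm{E}(S) = 0$ (i.e.\ $S = \mathbb{N}$, which is vacuously atomic because no numerical semigroup properly contains it) needs a separate one-line dismissal, since your case split starts at $\#\mathrm{E}(S) = 1$.
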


Let us recall that, if $F$ is a positive integer, then $\mathcal{I}(F)=\mathcal{I} \cap \mathcal{S}(F)$. In addition, let us denote by $\mathcal{N}(F)$ the set of all $\mathrm{ANI}$-semigroups with Frobenius number equal to $F$. Then, from Proposition~\ref{prop2}, we have that $\{\mathcal{I}(F),\mathcal{N}(F)\}$ is a partition of $\mathcal{A}(F)$. On the other hand, in \cite{forum} it is shown an algorithmic process which allows us to compute $\mathcal{I}(F)$. Therefore, from now on, we will focus our attention on giving an algorithmic process in order to compute $\mathcal{N}(F)$.

The following result is an immediate consequence of Proposition~\ref{prop2}.

\begin{corollary}\label{cor3}
	A numerical semigroup $S$ is an $\mathrm{ANI}$-semigroup if and only if $\#\mathrm{E}(S)=2$.
\end{corollary}

In the next result, which is Theorem~19 of \cite{linear}, are exposed the conditions that must fulfil two positive integers $g_1,g_2$ in order to exists at least one $\mathrm {ANI}$-semigroup $S$ such that $\mathrm{E}(S)=\{g_1,g_2\}$.

\begin{proposition}\label{prop4}
	Let $g_1,g_2$ be two positive integers such that $g_1<g_2$. Then there exists a numerical semigroup $S$ such that $\mathrm{E}(S)=\{g_1,g_2\}$ if and only if $\frac{g_2}{2}<g_1$ and it is fulfil at least one of the followings two conditions:
	\begin{itemize}
		\item $g_2-g_1 \mid g_2$;
		\item $2g_1-g_2 \nmid g_2$.
	\end{itemize}
\end{proposition}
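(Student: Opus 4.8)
Write $d=g_2-g_1$ and $e=2g_1-g_2$, so that $g_1=d+e$ and $g_2=2d+e$; these two numbers, together with the identities $d+e=g_1$ and $2d+e=g_2$, drive everything. I decide membership in $\mathrm{E}(S)$ straight from the definition, invoking Lemma~\ref{lem1} when convenient. As a normalisation, note that $\mathrm{F}(S)\in\mathrm{E}(S)$ and $\mathrm{F}(S)$ is the largest gap of $S$; hence if $\mathrm{E}(S)=\{g_1,g_2\}$ with $g_1<g_2$, then $g_2=\mathrm{F}(S)$. I prove the two implications in turn.

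\emph{Necessity.} Suppose $\mathrm{E}(S)=\{g_1,g_2\}$. First, $d$ is a gap, since $d\in S\setminus\{0\}$ would give $g_1+d=g_2\in S$. To see $g_2/2<g_1$, observe $2g_1\in S$ but $g_2\notin S$, so $2g_1\neq g_2$; if $2g_1<g_2$, then $d>g_1$, and I order gaps by $a\preceq b\iff b-a\in S$ and choose a $\preceq$-maximal gap $p$ among the finitely many gaps $y$ with $y-d\in S$. Were $p+s\notin S$ for some $s\in S\setminus\{0\}$, then $p+s$ would be a gap with $(p+s)-d=(p-d)+s\in S$ and $p\prec p+s$, against maximality; so $p+s\in S$ for every $s\in S\setminus\{0\}$. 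Since $p\ge d>g_2/2$ we get $2p>g_2$, hence $2p\in S$ and $p\in\mathrm{E}(S)$. But $p\ge d>g_1$ rules out $p=g_1$, and $p=g_2$ would give $g_2-d=g_1\in S$; so $p$ is a third special gap, a contradiction. Thus $g_2/2<g_1$, whence $d<g_1$, $e>0$, and $d$ is a non-special gap.

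It remains to force $d\mid g_2$ or $e\nmid g_2$, and this is the delicate step. Here the identities $d+e=g_1$ and $2d+e=g_2$ are pivotal: the single element $e$, if it belonged to $S$, would keep both $d$ and $2d=g_2-e$ out of $\mathrm{E}(S)$ (because $d+e=g_1\notin S$ and $2d+e=g_2\notin S$), and it is the only element that does so economically. If $e\mid g_2$, however, then $e\in S$ is impossible, since it would drag $g_2\in\langle e\rangle\subseteq S$; so $e\notin S$, and now $2d+s=g_2$ has no solution $s\in S$, so $2d$ can only be kept non-special by a genuinely different obstruction. Assuming also $d\nmid g_2$, I would show that no such obstruction is available and that $2d$ (or, in the remaining sub-cases, the gap $d$ or a larger multiple of $d$) is forced into $\mathrm{E}(S)$, contradicting $\#\mathrm{E}(S)=2$. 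Carrying this out is the main obstacle: it needs a case analysis according to the sizes of $d$ and $e$ and the multiples of $d$ lying between $g_2/2$ and $g_2$.

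\emph{Sufficiency.} Conversely, given $g_2/2<g_1$ together with $d\mid g_2$ or $e\nmid g_2$, I would exhibit $S$ explicitly, prescribing it through its residue classes modulo a suitable multiplicity $m$ (equivalently, through generators) so that the only gaps $p$ with $p+s\in S$ for all $s\in S\setminus\{0\}$ are $g_1$ and $g_2$; concretely one makes $g_1+m$ and $g_2+m$ the largest elements in their residue classes. Because $g_1,g_2>g_2/2$ forces $2g_1,2g_2\in S$, these are then exactly the special gaps. The hypothesis $e\nmid g_2$ corresponds to the regime where one may put $e\in S$ (killing $d$ and $2d$ as above), while $d\mid g_2$ is the regime where the multiples of $d$ align with $g_1,g_2$ and create no intermediate special gap; in each one checks $\mathrm{F}(S)=g_2$ and that no gap other than $g_1,g_2$ survives in $\mathrm{E}(S)$. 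On both sides the real work is identical and lies not in placing $g_1,g_2$ in $\mathrm{E}(S)$ but in excluding every other candidate special gap, the divisibility conditions being precisely what keep $2d=g_2-e$ (and $d$) out.
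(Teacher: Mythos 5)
Your proposal has a genuine gap --- in fact two. What you actually prove is only one piece of the necessity: that $\mathrm{E}(S)=\{g_1,g_2\}$ forces $\frac{g_2}{2}<g_1$. That part (ordering gaps by $a\preceq b \iff b-a\in S$, and extracting a $\preceq$-maximal gap $p$ among those with $p-d\in S$, which under the assumption $2g_1<g_2$ satisfies $p\geq d>\frac{g_2}{2}$ and hence becomes a third element of $\mathrm{E}(S)$) is correct and complete. But the two remaining assertions are only announced, never proved. For the necessity of the disjunction you write ``Assuming also $d\nmid g_2$, I would show that no such obstruction is available and that $2d$ \dots\ is forced into $\mathrm{E}(S)$,'' and you concede that ``carrying this out is the main obstacle.'' Indeed it is: nothing you wrote rules out, for instance, that $2d\in S$ (note $g_1+2d=g_2+d>\mathrm{F}(S)$, so $g_1\in\mathrm{PF}(S)$ yields no contradiction), so the claim that $d$ or some multiple of $d$ must become a special gap requires exactly the case analysis you skip. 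Similarly, the entire sufficiency direction is a declaration of intent: ``I would exhibit $S$ explicitly, prescribing it through its residue classes modulo a suitable multiplicity $m$'' names no semigroup, no $m$, and contains no verification that $\mathrm{E}(S)$ equals $\{g_1,g_2\}$ rather than containing further special gaps; for example, for $(g_1,g_2)=(8,11)$ the unique such semigroup is $\langle 5,7,9,13\rangle$, and nothing in your sketch produces it or anything comparable. So the delicate core of the equivalence --- precisely where the conditions $g_2-g_1\mid g_2$ and $2g_1-g_2\nmid g_2$ enter --- is missing in both directions.

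For the record, the paper itself does not prove this statement either: it quotes it as Theorem~19 of \cite{linear}, where the full argument lives. So there is no internal proof to compare against; judged on its own terms, your text is a correct proof of the inequality $\frac{g_2}{2}<g_1$ together with a plausible but unexecuted plan for everything else, and it cannot be accepted as a proof of the proposition.
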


As a consequence of the above proposition, we give the first step to obtain the algorithmic process which compute $\mathcal{N}(F)$. In fact, we begin considering the set
$$\mathrm{L}(F)=\left\{ l \in \mathbb{N} \;\Big\vert\; \frac{F}{2}<l<F \mbox{ and } \big(F-l\mid F \mbox{ or } 2l-F\nmid F\big) \right\}.$$
Now, for each $l \in \mathrm{L}(F)$, we compute the set
$$\mathcal{N}(F,l) = \big\{ S \mid S \mbox{ is a numerical semigroup with } \mathrm{E}(S)=\{l,F\} \big\}.$$
Then it is clear that $\mathcal{N}(F)=\bigcup_{l\in \mathrm{L}(F)} \mathcal{N}(F,l)$.

Let $S$ be a numerical semigroup. We denote by
$$\mathrm{PF}(S)=\left\{ x\in \mathbb{Z} \setminus S \mid x+s\in S \mbox{ for all } s\in S \setminus \{0\} \right\}.$$
This set was introduced in \cite{froberg} and its cardinality is an important invariant of $S$, the so-called \emph{type} of $S$. Let us observe that $\mathrm{E}(S)=\left\{ x\in\mathrm{PF}(S) \mid 2x \in S \right\}$. We denote by $\mathrm{BPF}(S)=\left\{ x\in \mathrm{PF}(S) \;\big\vert\; x>\frac{\mathrm{F}(S)}{2} \right\}$.

\begin{lemma}\label{lem5}
	Let $g_1,g_2$ be two positive integers such that $\frac{g_2}{2}<g_1<g_2$. If $S$ is a numerical semigroup and $\mathrm{E}(S)=\{g_1,g_2\}$, then $\mathrm{BPF}(S)=\{g_1,g_2\}$.
\end{lemma}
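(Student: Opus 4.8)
The plan is to first pin down the Frobenius number of $S$ from the hypothesis $\mathrm{E}(S)=\{g_1,g_2\}$, and then to establish the two inclusions $\{g_1,g_2\}\subseteq\mathrm{BPF}(S)$ and $\mathrm{BPF}(S)\subseteq\{g_1,g_2\}$ by unwinding the definitions, the key mechanical input being the inequality $x>\frac{\mathrm{F}(S)}{2}\Rightarrow 2x>\mathrm{F}(S)$.

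First I would identify $\mathrm{F}(S)$. The excerpt records that $\mathrm{F}(S)\in\mathrm{E}(S)$ for every numerical semigroup $S\neq\mathbb{N}$, and here $\mathrm{E}(S)=\{g_1,g_2\}$, so $\mathrm{F}(S)\in\{g_1,g_2\}$. Since $g_1$ and $g_2$ are gaps of $S$ (they lie in $\mathrm{E}(S)\subseteq\mathbb{N}\setminus S$) and $\mathrm{F}(S)$ is by definition the largest gap, the inequality $g_1<g_2$ forces $\mathrm{F}(S)=g_2$. Consequently $\frac{\mathrm{F}(S)}{2}=\frac{g_2}{2}$, and the hypothesis $\frac{g_2}{2}<g_1<g_2$ says exactly that both $g_1$ and $g_2$ lie strictly above $\frac{\mathrm{F}(S)}{2}$.

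Next I would prove $\{g_1,g_2\}\subseteq\mathrm{BPF}(S)$: both elements belong to $\mathrm{E}(S)\subseteq\mathrm{PF}(S)$, and by the previous paragraph both exceed $\frac{\mathrm{F}(S)}{2}$, which is precisely the defining condition for membership in $\mathrm{BPF}(S)$. For the reverse inclusion $\mathrm{BPF}(S)\subseteq\{g_1,g_2\}$, I would take an arbitrary $x\in\mathrm{BPF}(S)$, so that $x\in\mathrm{PF}(S)$ and $x>\frac{\mathrm{F}(S)}{2}$, whence $2x>\mathrm{F}(S)$; since every integer larger than the Frobenius number lies in $S$, this gives $2x\in S$. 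Using the observation $\mathrm{E}(S)=\{y\in\mathrm{PF}(S)\mid 2y\in S\}$ from the excerpt, we conclude $x\in\mathrm{E}(S)=\{g_1,g_2\}$. Combining the two inclusions yields $\mathrm{BPF}(S)=\{g_1,g_2\}$.

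The argument is essentially a matter of definition-chasing, so there is no substantial obstacle. The only point deserving a moment's care is the reverse inclusion: one must use the \emph{strictness} of $x>\frac{\mathrm{F}(S)}{2}$ to obtain $2x>\mathrm{F}(S)$ (and hence $2x\in S$) rather than merely $2x\geq\mathrm{F}(S)$, so that a pseudo-Frobenius number sitting exactly at $\frac{\mathrm{F}(S)}{2}$ cannot belong to $\mathrm{BPF}(S)$ without already lying in $\mathrm{E}(S)$. This is what makes the halving inequality interact cleanly with the hypothesis.
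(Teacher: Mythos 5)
Your proof is correct and follows essentially the same route as the paper: both inclusions are obtained by the same definition-chasing, with $x>\frac{\mathrm{F}(S)}{2}\Rightarrow 2x>\mathrm{F}(S)\Rightarrow 2x\in S$ giving $\mathrm{BPF}(S)\subseteq\mathrm{E}(S)$, and $\mathrm{E}(S)\subseteq\mathrm{PF}(S)$ together with $\frac{g_2}{2}<g_1<g_2$ giving the reverse. The only difference is that you spell out the identification $\mathrm{F}(S)=g_2$, which the paper leaves implicit --- a worthwhile clarification, not a different argument.
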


\begin{proof}
	If $x\in \mathrm{BPF}(S)$, then $2x>\mathrm{F}(S)$. Therefore, $2x\in S$ and, consequently, $x\in \mathrm{E}(S)$. Thus, we have that $\mathrm{BPF}(S)\subseteq \mathrm{E}(S)$.
	
	On the other hand, since $\mathrm{E}(S) \subseteq \mathrm{PF}(S)$ and $\frac{g_2}{2}<g_1<g_2$, we easily deduce that $\mathrm{E}(S)\subseteq \mathrm{BPF}(S)$.
\end{proof}

From the above lemma and Theorem~8 in \cite{belga}, we get the following result.

\begin{proposition}\label{prop6}
	Let $g_1,g_2$ be two positive integers such that $\frac{g_2}{2}<g_1<g_2$ and let $S$ be a numerical semigroup such that $\mathrm{E}(S)=\{g_1,g_2\}$. Then there exist $S_1\in \mathcal{I}(g_1)$ and $S_2\in \mathcal{I}(g_2)$ such that $S=S_1\cap S_2$.
\end{proposition}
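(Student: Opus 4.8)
The plan is to reduce the statement to the general fact that a numerical semigroup decomposes as an intersection of irreducible numerical semigroups whose Frobenius numbers are precisely the elements of $\mathrm{BPF}$, and then to read off the two pieces from Lemma~\ref{lem5}. First I would record that $g_2=\mathrm{F}(S)$: since $\mathrm{F}(S)\in\mathrm{E}(S)=\{g_1,g_2\}$ and $g_1<g_2$, the larger special gap $g_2$ is forced to be the Frobenius number. Lemma~\ref{lem5} then applies verbatim and yields $\mathrm{BPF}(S)=\{g_1,g_2\}$, so the two prescribed Frobenius numbers are exactly the elements of $\mathrm{BPF}(S)$, which is the hook for the decomposition.

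Next I would produce the two candidate irreducibles. For $g_2$ this is immediate: by Lemma~\ref{lem1} the set $S\cup\{g_1\}$ is a numerical semigroup (still with Frobenius number $g_2$, since $g_1<g_2$), and any maximal element $S_2$ of $\mathcal{S}(g_2)$ containing it is irreducible, because maximality in $\mathcal{S}(\mathrm{F})$ is the characterization of irreducibility recalled in the Introduction; thus $S_2\in\mathcal{I}(g_2)$, $S\subseteq S_2$ and $g_1\in S_2$. For $g_1$ I would first pass to $T=S\cup\{n\in\mathbb{N}\mid n>g_1\}$, check that it is closed under addition and has $\mathrm{F}(T)=g_1$ (note $g_1\notin S$ since $g_1\in\mathrm{E}(S)$), and then take a maximal element $S_1$ of $\mathcal{S}(g_1)$ with $T\subseteq S_1$; this $S_1$ lies in $\mathcal{I}(g_1)$ and contains $S$. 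With these choices the inclusion $S\subseteq S_1\cap S_2$ is clear, and the two prescribed gaps separate correctly: $g_1\notin S_1$ while $g_1\in S_2$, and $g_2\notin S_2$ while $g_2\in S_1$, so neither $g_1$ nor $g_2$ survives in $S_1\cap S_2$.

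The hard part is the reverse inclusion $S_1\cap S_2\subseteq S$, i.e. guaranteeing that no gap of $S$ is swallowed simultaneously by both oversemigroups. The delicate range is $g_1<x<g_2$: such a gap automatically lies in $S_1$, so one must ensure it avoids $S_2$. The leverage is that every gap $x$ in this range satisfies $x>\frac{g_2}{2}$ and hence, were it a pseudo-Frobenius number, would belong to $\mathrm{BPF}(S)=\{g_1,g_2\}$ — impossible; so $x\notin\mathrm{PF}(S)$ and can be pushed up by some nonzero element of $S$, which constrains the maximal oversemigroups. Coordinating this control over the two ranges (gaps above $g_1$ handled by $S_2$, and gaps below $g_1$ handled by whichever of $S_1,S_2$ omits them) so that the two choices remain mutually compatible is exactly the subtle bookkeeping. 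Rather than carry it out by hand, I would invoke Theorem~8 of \cite{belga}, which packages precisely this decomposition of a numerical semigroup into irreducibles indexed by $\mathrm{BPF}$ with matching Frobenius numbers. Combining that decomposition with $\mathrm{BPF}(S)=\{g_1,g_2\}$ delivers $S=S_1\cap S_2$ with $S_1\in\mathcal{I}(g_1)$ and $S_2\in\mathcal{I}(g_2)$, completing the argument.
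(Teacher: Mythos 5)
Your proof is correct and, in its final form, is exactly the paper's argument: the paper derives Proposition~\ref{prop6} directly from Lemma~\ref{lem5} (giving $\mathrm{BPF}(S)=\{g_1,g_2\}$) together with Theorem~8 of \cite{belga}, which is precisely the combination you invoke. The intermediate construction of candidate maximal oversemigroups $S_1,S_2$ is superfluous scaffolding that you rightly abandon, since arbitrary maximal choices need not satisfy $S_1\cap S_2\subseteq S$; the citation of Theorem~8 of \cite{belga} is what carries the proof, just as in the paper.
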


If $h_1,\ldots,h_n$ are positive integers, then
\begin{itemize}
	\item $\mathcal{\overline S}(h_1,\ldots,h_n)$ denotes the set of all numerical semigroups $S$ such that $\{h_1,\ldots,h_n\}\cap S=\emptyset$;
	\item $\mathcal{M}(h_1,\ldots,h_n)$ is the set of the maximal elements (with respect to the inclusion order) of $\mathcal{\overline S}(h_1,\ldots,h_n)$.
\end{itemize}

The next result is Proposition~12 of \cite{linear}.

\begin{proposition}\label{prop7}
	Let $S$ be a numerical semigroup and let $g_1,g_2$ be positive integers such that $\frac{g_2}{2}<g_1<g_2$. Then the following conditions are equivalent.
	\begin{enumerate}
		\item $\mathrm{E}(S)=\{g_1,g_2\}$.
		\item $S\in \mathcal{M}(g_1,g_2)$ and $g_2-g_1\notin S$.
	\end{enumerate}
\end{proposition}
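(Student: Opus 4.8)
The plan is to prove the two implications separately, relying on one simple observation that I will establish first: whenever $S\subsetneq T$ are numerical semigroups, the largest element $x$ of the finite set $T\setminus S$ belongs to $\mathrm{E}(S)$. Indeed, $x\notin S$, while $2x$ and each $x+s$ (for $s\in S\setminus\{0\}$) lie in $T$ and are strictly larger than $x$; by maximality of $x$ they cannot lie in $T\setminus S$, so they lie in $S$, which is exactly the condition for $x\in\mathrm{E}(S)$.

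For $(1)\Rightarrow(2)$, assume $\mathrm{E}(S)=\{g_1,g_2\}$. Since $\mathrm{E}(S)\subseteq\mathbb{N}\setminus S$ we have $g_1,g_2\notin S$, so $S\in\mathcal{\overline S}(g_1,g_2)$. If $S$ were not maximal there would be $T\in\mathcal{\overline S}(g_1,g_2)$ with $S\subsetneq T$; the observation above forces $\max(T\setminus S)\in\mathrm{E}(S)=\{g_1,g_2\}$, contradicting $g_1,g_2\notin T$. Hence $S\in\mathcal{M}(g_1,g_2)$. Finally $g_2-g_1\notin S$, since otherwise $g_2-g_1\in S\setminus\{0\}$ and the defining property of $g_1\in\mathrm{E}(S)$ would give $g_1+(g_2-g_1)=g_2\in S$, against $g_2\notin S$.

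For $(2)\Rightarrow(1)$, assume $S\in\mathcal{M}(g_1,g_2)$ and $g_2-g_1\notin S$. First I would pin down the Frobenius number: $g_2\notin S$ gives $\mathrm{F}(S)\ge g_2$, while if $\mathrm{F}(S)>g_2$ then $\mathrm{F}(S)\in\mathrm{E}(S)$ is a special gap different from $g_1,g_2$, so $S\cup\{\mathrm{F}(S)\}$ (a numerical semigroup by Lemma~\ref{lem1}) would lie in $\mathcal{\overline S}(g_1,g_2)$ and strictly contain $S$, against maximality; hence $\mathrm{F}(S)=g_2\in\mathrm{E}(S)$. The same maximality argument shows $\mathrm{E}(S)\subseteq\{g_1,g_2\}$, because any special gap $x\notin\{g_1,g_2\}$ would let $S\cup\{x\}$ enlarge $S$ inside $\mathcal{\overline S}(g_1,g_2)$. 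Thus $\mathrm{E}(S)$ is either $\{g_2\}$ or $\{g_1,g_2\}$.

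It remains to exclude $\mathrm{E}(S)=\{g_2\}$, and this is the step where the hypotheses $\frac{g_2}{2}<g_1$ and $g_2-g_1\notin S$ are essential; I expect it to be the main obstacle. If $\mathrm{E}(S)=\{g_2\}$ then $\#\mathrm{E}(S)=1$, so by Proposition~\ref{prop2} $S$ is irreducible, i.e.\ symmetric or pseudo-symmetric with $\mathrm{F}(S)=g_2$. In either case the symmetry of $S$ applied to the gap $g_1$ — which is distinct from $\frac{g_2}{2}$ precisely because $\frac{g_2}{2}<g_1$ — yields $g_2-g_1\in S$, contradicting the hypothesis. Hence $\#\mathrm{E}(S)=2$ and $\mathrm{E}(S)=\{g_1,g_2\}$, completing the equivalence. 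An alternative, self-contained route to the same conclusion would be to verify $g_1\in\mathrm{E}(S)$ directly: one checks $2g_1\in S$ (immediate from $2g_1>g_2=\mathrm{F}(S)$) and that $g_1+s\in S$ for every $s\in S\setminus\{0\}$, where the only dangerous case is a gap $g_1+s$ with $g_1<g_1+s<g_2$; ruling this out is exactly the delicate point that the symmetry argument disposes of cleanly.
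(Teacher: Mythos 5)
Your proof is correct, but note that the paper itself never proves this proposition: it is stated verbatim as a quotation of Proposition~12 of \cite{linear}, so there is no internal proof to compare against. What you have produced is a self-contained argument built only from ingredients already quoted in the paper, and it hangs together: the opening observation that $\max(T\setminus S)\in\mathrm{E}(S)$ whenever $S\subsetneq T$ is a standard and correctly justified fact, and it cleanly gives both the maximality of $S$ in $(1)\Rightarrow(2)$ and (together with Lemma~\ref{lem1}) the inclusion $\mathrm{E}(S)\subseteq\{g_1,g_2\}$ plus $\mathrm{F}(S)=g_2$ in $(2)\Rightarrow(1)$. You correctly identified the genuinely delicate point, namely excluding $\mathrm{E}(S)=\{g_2\}$, and your resolution is valid: $\#\mathrm{E}(S)=1$ forces irreducibility by Proposition~\ref{prop2}, hence $S$ is symmetric or pseudo-symmetric according to the parity of $g_2$ (the characterization from \cite{froberg} and \cite{barucci} that the paper's introduction quotes), and since $g_1$ is a gap with $g_1\neq\frac{g_2}{2}$ (this is exactly where $\frac{g_2}{2}<g_1$ enters), symmetry yields $g_2-g_1\in S$, contradicting the hypothesis. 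This use of the symmetric/pseudo-symmetric dichotomy is an attractive shortcut: it converts the combinatorial verification that $g_1+s\in S$ for all $s\in S\setminus\{0\}$ (the ``dangerous case'' you flag in your alternative route) into a one-line application of a classical structural result, at the cost of invoking machinery (irreducibility $=$ (pseudo-)symmetry) that a fully elementary proof, such as the one presumably given in \cite{linear}, would avoid.
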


The following result is an improvement of Proposition~\ref{prop6}.

\begin{proposition}\label{prop8}
	Let $g_1,g_2$ be two positive integers such that $\frac{g_2}{2}<g_1<g_2$ and let $S$ be a numerical semigroup such that $\mathrm{E}(S)=\{g_1,g_2\}$. Then there exist $S_1\in \mathcal{I}(g_1)$ and $S_2\in \mathcal{I}(g_2)$ such that $g_1\in S_2$ and $S=S_1\cap S_2$.
\end{proposition}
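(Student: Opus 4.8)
The plan is to sharpen the proof of Proposition~\ref{prop6} by building the factor $S_2$ around $g_1$ from the very start, and then to let the maximality of $S$ do all the remaining work. The starting point is Proposition~\ref{prop7}: the hypothesis $\mathrm{E}(S)=\{g_1,g_2\}$ forces $S\in\mathcal{M}(g_1,g_2)$, that is, $S$ is maximal among the numerical semigroups avoiding both $g_1$ and $g_2$. This maximality is the engine of the whole argument, since it means that \emph{any} numerical semigroup containing $S$ and still avoiding both $g_1$ and $g_2$ must equal $S$.

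First I would construct $S_2$. Because $g_1\in\mathrm{E}(S)$, Lemma~\ref{lem1} tells us that $S\cup\{g_1\}$ is a numerical semigroup, and it lies in $\mathcal{\overline S}(g_2)$ since $g_2\notin S$ and $g_2\neq g_1$. I would then take $S_2$ to be a maximal element, with respect to inclusion, of the (finite) family $\{T\in\mathcal{\overline S}(g_2)\mid S\cup\{g_1\}\subseteq T\}$. A short argument shows that $S_2$ is actually maximal in all of $\mathcal{\overline S}(g_2)$: any $T\in\mathcal{\overline S}(g_2)$ with $T\supseteq S_2$ automatically contains $S\cup\{g_1\}$, so maximality within the constrained family already yields maximality in $\mathcal{\overline S}(g_2)$. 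Hence $S_2\in\mathcal{M}(g_2)$, and by the standard identification of maximal semigroups avoiding a single element with the irreducible semigroups of that Frobenius number, $S_2\in\mathcal{I}(g_2)$; moreover $g_1\in S_2$ by construction. For $S_1$ I would take any maximal element of $\mathcal{\overline S}(g_1)$ containing $S$; exactly as for $S_2$ this lies in $\mathcal{M}(g_1)=\mathcal{I}(g_1)$ and satisfies $S\subseteq S_1$.

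It remains to check $S=S_1\cap S_2$, and here the maximality of $S$ closes everything in one stroke. The set $S_1\cap S_2$ is a numerical semigroup; it avoids $g_1$, since $g_1=\mathrm{F}(S_1)\notin S_1$, and it avoids $g_2$, since $g_2=\mathrm{F}(S_2)\notin S_2$, so $S_1\cap S_2\in\mathcal{\overline S}(g_1,g_2)$. As $S\subseteq S_1$ and $S\subseteq S_2$ we have $S\subseteq S_1\cap S_2$, and the maximality of $S$ in $\mathcal{M}(g_1,g_2)$ forces $S_1\cap S_2=S$. Note that this step never asks which extra gaps of $S$ are absorbed into $S_1$ or into $S_2$: the maximality of $S$ among semigroups avoiding $\{g_1,g_2\}$ replaces any such bookkeeping, and this is precisely why seeding $S_2$ with $g_1$ costs nothing.

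The delicate point I would write out most carefully is the passage from ``maximal in the constrained family'' to genuine membership in $\mathcal{I}(g_2)$: one must confirm that the constraint $S\cup\{g_1\}\subseteq T$ does not interfere with maximality in $\mathcal{\overline S}(g_2)$, and that a maximal element of $\mathcal{\overline S}(g_2)$ has Frobenius number exactly $g_2$ (were it larger, adjoining that Frobenius number would produce a strictly larger semigroup still avoiding $g_2$, a contradiction). Everything else is either immediate or a direct appeal to Lemma~\ref{lem1} and Proposition~\ref{prop7}; the only genuinely new ingredient relative to Proposition~\ref{prop6} is the choice to start the extension from $S\cup\{g_1\}$, after which the decomposition identity is handed to us by maximality.
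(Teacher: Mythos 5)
Your proof is correct, but it takes a genuinely different route from the paper's. The paper starts from Proposition~\ref{prop6} (which already supplies some decomposition $S=S_1\cap S_2$ with $S_1\in\mathcal{I}(g_1)$, $S_2\in\mathcal{I}(g_2)$) and then repairs it: assuming $g_1\notin S_2$, the semigroup $S_2$ lies in $\mathcal{\overline S}(g_1,g_2)$ and contains $S$, so the maximality of $S$ given by Proposition~\ref{prop7} forces $S=S_2$; but then $S$ would be irreducible, and Proposition~\ref{prop2} would give $\#\mathrm{E}(S)\leq 1$, a contradiction. You instead bypass Proposition~\ref{prop6} entirely and build the decomposition from scratch: you seed $S_2$ with $S\cup\{g_1\}$ (a numerical semigroup by Lemma~\ref{lem1}), extend to a maximal element of $\mathcal{\overline S}(g_2)$, check that constrained maximality lifts to maximality in $\mathcal{\overline S}(g_2)$ and that such a maximal element has Frobenius number exactly $g_2$, hence is irreducible by the characterization from \cite{pacific} quoted in the introduction; you produce $S_1$ the same way, and then the maximality of $S$ in $\mathcal{M}(g_1,g_2)$ (Proposition~\ref{prop7} again) yields $S=S_1\cap S_2$ in one stroke. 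What the paper's argument buys is brevity, since Proposition~\ref{prop6} (itself resting on Lemma~\ref{lem5} and Theorem~8 of \cite{belga}) does the heavy lifting; what yours buys is self-containedness: it never invokes Proposition~\ref{prop6} or Proposition~\ref{prop2}, and in fact reproves Proposition~\ref{prop6} with the extra condition $g_1\in S_2$ built in. Note also that your construction is essentially the same device the paper itself uses later in the proof of Theorem~\ref{thm10} ($\mathit{2.\Rightarrow 1.}$), where Lemma~\ref{lem9} plays the role of your ``extend to a maximal/irreducible element'' step; you could shorten your write-up by citing Lemma~\ref{lem9} applied to $S\cup\{g_1\}\in\mathcal{S}(g_2)$ and to $S\in\mathcal{S}(g_1)$ in place of the maximality argument, though your version has the merit of making the folklore identification $\mathcal{M}(F)=\mathcal{I}(F)$ explicit.
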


\begin{proof}
	From Proposition~\ref{prop6}, there exist $S_1\in \mathcal{I}(g_1)$ and $S_2\in \mathcal{I}(g_2)$ such that $S=S_1\cap S_2$.
	
	Now, let us suppose that $g_1\notin S_2$. In such a case, $S_2\in \mathcal{\overline S}(g_1,g_2)$. Then, by Proposition~\ref{prop7}, we have that $S=S_2$ (since $S=S_1\cap S_2 \subseteq S_2$) and, by Proposition~\ref{prop2}, that $\#\mathrm{E}(S)\leq1$. Thus, we get a contradiction and, in consequence, $g_1\in S_2$.
\end{proof}

The next result is Lemma~5 of \cite{computer}.

\begin{lemma}\label{lem9}
	If $S\in\mathcal{S}(F)$, then $S\cup \big\{ x\in\mathbb{N}\setminus S \;\big\vert\; F-x \notin S \mbox{ and } x>\frac{F}{2} \big\} \in \mathcal{I}(F)$.
\end{lemma}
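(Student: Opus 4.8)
The plan is to write $T = S \cup D$, where $D = \big\{ x \in \mathbb{N}\setminus S \mid F-x \notin S \text{ and } x > \frac{F}{2} \big\}$, and to prove in turn that $T$ is a numerical semigroup with $\mathrm{F}(T)=F$ and that $\mathrm{E}(T)=\{F\}$. The conclusion $T\in\mathcal{I}(F)$ then follows at once from Proposition~\ref{prop2}: since $F\notin T$ we have $T\neq\mathbb{N}$, so $\mathrm{F}(T)=F\in\mathrm{E}(T)$, and $\mathrm{E}(T)=\{F\}$ gives $\#\mathrm{E}(T)=1\in\{0,1\}$, whence $T$ is irreducible with Frobenius number $F$.

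First I would verify that $T$ is a numerical semigroup. Since $S\subseteq T$ and $S$ is already a numerical semigroup, membership of $0$ and cofiniteness of $\mathbb{N}\setminus T$ are automatic, so the real content is closure under addition. For $a,b\in T$ I would distinguish three cases. If $a,b\in S$, then $a+b\in S\subseteq T$. If $a,b\in D$, then each summand exceeds $\frac{F}{2}$, so $a+b>F$ and hence $a+b\in S$ because $F=\mathrm{F}(S)$. The delicate case is $a\in S$, $b\in D$: here $a+b>\frac{F}{2}$, and if $a+b\notin S$ I must check that $F-(a+b)\notin S$ so as to place $a+b$ in $D$. This is the step I expect to be the main obstacle, and it is resolved by contradiction: were $F-(a+b)\in S$, then adding $a\in S$ would give $(F-(a+b))+a=F-b\in S$, contradicting $b\in D$. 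This uses the closure of $S$ under addition in an essential way and is the heart of the argument. For the Frobenius number, I would observe that $F\notin S$ and $F\notin D$ (since $F-F=0\in S$), so $F\notin T$, while every integer larger than $F$ already lies in $S\subseteq T$; hence $F$ is the largest gap of $T$.

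To determine $\mathrm{E}(T)$, I would first establish the key complementation fact: for every integer $x$ with $0<x<F$ and $x\notin T$ one has $F-x\in T$. If $x>\frac{F}{2}$, then $x\notin D$ forces $F-x\in S\subseteq T$ (the alternative $x\leq\frac{F}{2}$ being excluded). If $x<\frac{F}{2}$, then $F-x>\frac{F}{2}$, and either $F-x\in S$ or, using $x\notin S$, the three defining conditions of $D$ hold for $F-x$, giving $F-x\in D$. With this fact in hand, suppose $x\in\mathrm{E}(T)$ with $x\neq F$; then $0<x<F$ and $x\notin T$. If $x=\frac{F}{2}$, then $2x=F\notin T$ contradicts the requirement $2x\in T$ for $x\in\mathrm{E}(T)$. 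Otherwise the complementation fact gives $F-x\in T\setminus\{0\}$, so the defining property of $\mathrm{E}(T)$ yields $x+(F-x)=F\in T$, again contradicting $F\notin T$. Therefore $\mathrm{E}(T)=\{F\}$, and Proposition~\ref{prop2} completes the proof that $T\in\mathcal{I}(F)$.
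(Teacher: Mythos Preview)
Your proof is correct. Each step checks out: closure of $T$ under addition (the mixed case $a\in S$, $b\in D$ is handled cleanly by the contrapositive argument on $F-b$), the verification $\mathrm{F}(T)=F$, the complementation property for gaps of $T$ strictly between $0$ and $F$, and the deduction $\mathrm{E}(T)=\{F\}$ via Proposition~\ref{prop2}.

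As for comparison with the paper: there is nothing to compare, because the paper does not prove Lemma~\ref{lem9} at all---it simply imports it as Lemma~5 of \cite{computer}. Your self-contained argument, routing irreducibility through the characterisation $\#\mathrm{E}(T)\le 1$ already available in this paper, is a perfectly good substitute and keeps the exposition internal.
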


At this point, we are ready to prove the main result of this section.

\begin{theorem}\label{thm10}
	Let $S$ be a numerical semigroup and let $g_1,g_2$ be positive integers such that $\frac{g_2}{2}<g_1<g_2$. Then the following conditions are equivalent.
	\begin{enumerate}
		\item $\mathrm{E}(S)=\{g_1,g_2\}$.
		\item $S$ is a maximal element (with respect to the inclusion order) of the set $\mathcal{I}_\cap(g_1,g_2)=\big\{ S_1\cap S_2 \mid S_1\in\mathcal{I}(g_1), \; S_2\in\mathcal{I}(g_2) \mbox{ and } g_1\in S_2 \big\}$ and, in addition, $g_1$ belongs to every element of $\mathcal{I}(g_2)$ which contains $S$.
	\end{enumerate}
\end{theorem}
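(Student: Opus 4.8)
The plan is to prove the equivalence $(1)\Leftrightarrow(2)$ by treating the two implications separately, leaning on Proposition~\ref{prop8} for one direction and on Lemma~\ref{lem9} together with Proposition~\ref{prop7} for the other.

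For the implication $(1)\Rightarrow(2)$, suppose $\mathrm{E}(S)=\{g_1,g_2\}$. First I would invoke Proposition~\ref{prop8} to write $S=S_1\cap S_2$ with $S_1\in\mathcal{I}(g_1)$, $S_2\in\mathcal{I}(g_2)$ and $g_1\in S_2$, which immediately shows $S\in\mathcal{I}_\cap(g_1,g_2)$. To see that $S$ is \emph{maximal} in this set, I would take any $T=T_1\cap T_2\in\mathcal{I}_\cap(g_1,g_2)$ with $S\subseteq T$ and argue that $S=T$. Here the idea is that $g_1,g_2\notin T$ (because $g_2=\mathrm{F}(T_2)\notin T_2$ and $g_1\notin T_1$ since $g_1=\mathrm{F}(T_1)$, so neither lies in the intersection), hence $T\in\overline{\mathcal{S}}(g_1,g_2)$ with $S\subseteq T$; combined with Proposition~\ref{prop7}, which characterizes $\mathrm{E}(S)=\{g_1,g_2\}$ as $S$ being maximal in $\overline{\mathcal{S}}(g_1,g_2)$ subject to $g_2-g_1\notin S$, maximality of $S$ should force $T=S$. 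The second clause of $(2)$, that $g_1$ belongs to every $S_2'\in\mathcal{I}(g_2)$ containing $S$, I would establish by contradiction exactly as in the proof of Proposition~\ref{prop8}: if some such $S_2'$ omitted $g_1$, then $S_2'\in\overline{\mathcal{S}}(g_1,g_2)$ and $S\subseteq S_2'$ would force $S=S_2'$ by maximality (Proposition~\ref{prop7}), making $\#\mathrm{E}(S)\le 1$ by Proposition~\ref{prop2}, a contradiction.

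For the converse $(2)\Rightarrow(1)$, suppose $S$ is maximal in $\mathcal{I}_\cap(g_1,g_2)$ and that $g_1$ lies in every element of $\mathcal{I}(g_2)$ containing $S$. Since $S=S_1\cap S_2$ with $S_1\in\mathcal{I}(g_1)$ and $S_2\in\mathcal{I}(g_2)$, we have $g_1,g_2\notin S$, so $S\in\overline{\mathcal{S}}(g_1,g_2)$, and moreover $g_2-g_1\notin S$ should follow from the structure (indeed $g_2-g_1<g_1$, and one checks $g_2-g_1\notin S$ using that $g_1\in S_2$ forces $g_2-g_1\notin S_2$, else $g_1+(g_2-g_1)=g_2\in S_2$). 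By Proposition~\ref{prop7}, it then suffices to show $S$ is maximal in $\overline{\mathcal{S}}(g_1,g_2)$. Here I would argue that if $S\subsetneq T$ for some $T\in\overline{\mathcal{S}}(g_1,g_2)$, then by Lemma~\ref{lem9} the set $\widehat{T}=T\cup\{x\in\mathbb{N}\setminus T\mid g_2-x\notin T,\ x>g_2/2\}$ lies in $\mathcal{I}(g_2)$ and contains $S$; by the second hypothesis $g_1\in\widehat{T}$, so intersecting $\widehat{T}$ with a suitable $S_1'\in\mathcal{I}(g_1)$ produces an element of $\mathcal{I}_\cap(g_1,g_2)$ properly containing $S$, contradicting maximality of $S$ in $\mathcal{I}_\cap(g_1,g_2)$.

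The main obstacle I anticipate is the bookkeeping in the converse: I must produce, from an arbitrary $T\in\overline{\mathcal{S}}(g_1,g_2)$ strictly containing $S$, a genuine element of $\mathcal{I}_\cap(g_1,g_2)$ that still contains $S$ properly, and this requires care in choosing the irreducible oversemigroup $S_1'\in\mathcal{I}(g_1)$ on the $g_1$-side so that the intersection neither collapses back to $S$ nor accidentally swallows $g_1$ or $g_2$. The role of the extra hypothesis ``$g_1$ belongs to every element of $\mathcal{I}(g_2)$ containing $S$'' is precisely to guarantee $g_1\in\widehat{T}$, which is what lets the construction land inside $\mathcal{I}_\cap(g_1,g_2)$ rather than merely in $\overline{\mathcal{S}}(g_1,g_2)$; verifying this interplay cleanly, and confirming $g_2-g_1\notin S$ throughout, is where the real work lies, whereas the remaining inclusions are routine once Propositions~\ref{prop7} and~\ref{prop8} and Lemma~\ref{lem9} are in hand.
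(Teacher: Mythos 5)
Your proposal is correct and takes essentially the same route as the paper's own proof: Proposition~\ref{prop8} plus Proposition~\ref{prop7} (and Proposition~\ref{prop2}) for $(1)\Rightarrow(2)$, and Lemma~\ref{lem9} together with the hypothesis that $g_1$ lies in every $\mathcal{I}(g_2)$-oversemigroup of $S$ for $(2)\Rightarrow(1)$. The one loose end you flag --- producing the ``suitable'' $S_1'\in\mathcal{I}(g_1)$ --- is handled just as on the $g_2$ side (the paper glosses over it with ``we easily deduce''): since $g_1\notin T$, apply Lemma~\ref{lem9} to $T\cup\{x\in\mathbb{N}\mid x>g_1\}\in\mathcal{S}(g_1)$ to obtain $S_1'\supseteq T$, whence $S_1'\cap\widehat{T}$ is an element of $\mathcal{I}_\cap(g_1,g_2)$ containing $T\supsetneq S$, which is the desired contradiction.
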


\begin{proof}
	$\mathit{1.\Rightarrow 2.)}$ By Proposition~\ref{prop8}, we know that, if $\mathrm{E}(S)=\{g_1,g_2\}$, then $S\in \mathcal{I}_\cap(g_1,g_2)$. Now, if $S$ is not a maximal element of $\mathcal{I}_\cap(g_1,g_2)$, then we deduce that $S\notin \mathcal{M}(g_1,g_2)$ and, by Proposition~\ref{prop7}, that $\mathrm{E}(S)\not=\{g_1,g_2\}$. That is, we have a contradiction.
	
	On the other hand, let $S^* \in \mathcal{I}(g_2)$ be a numerical semigroup such that $S\subseteq S^*$ and $g_1 \notin S^*$. Then $S^* \in \mathcal{\overline S}(g_1,g_2)$ and $S \subsetneq S^*$. Therefore, $S\notin \mathcal{M}(g_1,g_2)$ and, by applying again Proposition~\ref{prop7}, we conclude that $\mathrm{E}(S)\not=\{g_1,g_2\}$. That is, the same contradiction that in the above paragraph.
	
	$\mathit{2.\Rightarrow 1.)}$ Let $S$ be a numerical semigroup fulfils the conditions of $\mathit{2}$. From Proposition~\ref{prop7}, in order to see that $\mathrm{E}(S)=\{g_1,g_2\}$, it is enough to show that $S\in \mathcal{M}(g_1,g_2)$ and $g_2-g_1 \in S$.
	
	First of all, since $S\in \mathcal{I}_\cap(g_1,g_2)$, then there exist $S_1\in\mathcal{I}(g_1)$ and $S_2\in\mathcal{I}(g_2)$ such that $g_1\in S_2$ and $S=S_1 \cup S_2$. Because $g_2\notin S_2$ and $g_1 \in S_2$, we deduce that $g_2-g_1 \notin S_2$ and, therefore, $g_2-g_1 \notin S$.
	
	Now, let us suppose that $S\notin \mathcal{M}(g_1,g_2)$. Then, there exists $T\in \mathcal{\overline S}(g_1,g_2)$ such that $S\subsetneq T$. By applying Lemma~\ref{lem9}, we easily deduce that there exist $T_1\in\mathcal{I}(g_1)$ and $T_2\in\mathcal{I}(g_2)$ such that $g_1\in T_2$ and $T=T_1\cap T_2$ (observe that $T_2\in\mathcal{I}(g_2)$, $S\subset T_2$ and, consequently, $g_1\in T_2$). Therefore, $T\in \mathcal{I}_\cap(g_1,g_2)$ and $S\subsetneq T$, in contradiction with the maximality of $S$ in $\mathcal{I}_\cap(g_1,g_2)$.
\end{proof}

\section{The algorithm}\label{algorithm}

From here on, we are going to focus on showing an algorithm to compute, from two given positive integers $g_1$ and $g_2$ (in the conditions of Proposition~\ref{prop4}), the maximal elements of the set $\mathcal{I}_\cap(g_1,g_2)$.

We begin recalling briefly the algorithmic method described in \cite{forum} to compute $\mathcal{I}(F)$. First of all, we need to introduce several concepts and results.

If $A$ is a non-empty subset of $\mathbb{N}$, we denote by $\langle A \rangle$ the submonoid of $(\mathbb{N},+)$ generated by $A$, that is,
$$\langle A \rangle = \big\{ \lambda_1a_1+\cdots+\lambda_na_n \mid n\in \mathbb{N} \setminus\{0\}, \; a_1,\ldots,a_n\in A, \; \lambda_1,\ldots,\lambda_n \in \mathbb{N} \big\}.$$
It is well known (see for instance \cite{springer}) that $\langle A \rangle$ is a numerical semigroup if and only if $\gcd(A)=1$. If $S$ is a numerical semigroup and $S=\langle A \rangle$, then we say that $A$ is a \emph{system of generators} of $S$. In addition, if no proper subset of $A$ generates $S$, then we say that $A$ is a \emph{minimal system of generators} of $S$. It is also well known (see \cite{springer}) that every numerical semigroup has a unique minimal system of generators, which is denoted by $\mathrm{msg}(S)$. The smallest minimal generator of $S$ is called the \emph{multiplicity} of $S$ and denoted by $\mathrm{m}(S)$.

The next result is Proposition~2.7 of \cite{forum}. \big(Let us recall that, if $m_1,\ldots,m_s\in \mathbb{N}$ and $0\leq m_1\leq\cdots\leq m_s$, then $\left\{ 0,m_1,\dots,m_s,\to \right\}=\left\{ 0,m_1,\dots,m_s\right\} \cup \left\{m\in \mathbb{N} \mid m> m_s \right\}$.\big)

\begin{proposition}\label{prop11}
	Let $F$ be a positive integer. Then there exists a unique irreducible numerical semigroup $\mathrm{C}(F)$ with Frobenius number $F$ and all its minimal generators greater that $\frac{F}{2}$.
	Moreover,
	$$\mathrm{C}(F) =\left\{ \begin{array}{ll}
	\big\{ 0,\frac{F+1}{2},\to \big\} \setminus \{F\}, & \mbox{if $F$ is odd}, \\[3pt]
		\big\{ 0,\frac{F}{2}+1,\to \big\} \setminus \{F\}, & \mbox{if $F$ is even}.
	\end{array} \right.$$
\end{proposition}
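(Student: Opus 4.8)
The plan is to exhibit the set $\mathrm{C}(F)$ given by the displayed formula, to check directly that it is an irreducible numerical semigroup with Frobenius number $F$ whose minimal generators all exceed $\frac{F}{2}$ (this settles existence and the explicit description at once), and then to deduce uniqueness by a short maximality argument. The single observation that drives everything is that in a numerical semigroup all of whose minimal generators exceed $\frac{F}{2}$, any two nonzero elements add up to something strictly larger than $F$: concretely, $\frac{F+1}{2}+\frac{F+1}{2}=F+1$ in the odd case and $\big(\frac{F}{2}+1\big)+\big(\frac{F}{2}+1\big)=F+2$ in the even case.

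First I would verify that the set $S$ defined by the formula is a numerical semigroup with $\mathrm{F}(S)=F$. It contains $0$ and every integer greater than $F$, so it is cofinite, while $F\notin S$; hence $\mathrm{F}(S)=F$. Closure under addition follows from the key observation, since two nonzero elements are each $>\frac{F}{2}$ and so their sum exceeds $F$ and lies in $S$. The same observation shows that no nonzero element of $S$ that is at most $F$ can be written as a sum of two nonzero elements, so each such element is a minimal generator; reading them off the formula, the minimal generators are exactly the integers in $\big(\frac{F}{2},F\big)$, together with $F+1$ in the even case, all of which are $>\frac{F}{2}$.

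Next I would prove that $S$ is irreducible using the characterization recalled in the introduction: $S$ is irreducible if and only if it is maximal (for inclusion) in $\mathcal{S}(F)$. Suppose $S\subsetneq S'$ with $S'\in\mathcal{S}(F)$, and choose $x\in S'\setminus S$. Since $F\notin S'$ we have $x\neq F$, so $x$ is a gap of $S$ strictly below $F$, which forces $0<x\leq\frac{F}{2}$, with equality possible only when $F$ is even. If $x<\frac{F}{2}$, then $F-x>\frac{F}{2}$ belongs to $S\subseteq S'$, whence $F=x+(F-x)\in S'$, a contradiction; in the remaining even case $x=\frac{F}{2}$ gives $F=2x\in S'$, again a contradiction. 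Thus $S$ admits no proper enlargement inside $\mathcal{S}(F)$ and is therefore irreducible.

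Finally, for uniqueness let $T$ be any irreducible numerical semigroup with $\mathrm{F}(T)=F$ whose minimal generators all exceed $\frac{F}{2}$. Applying the key observation to $T$, every nonzero element of $T$ that is at most $F$ fails to be a sum of two nonzero elements, hence is a minimal generator and so exceeds $\frac{F}{2}$; combining this with $F\notin T$ and the fact that every integer $>F$ lies in $T$, one obtains $T\subseteq\mathrm{C}(F)$. Since $T$ is irreducible it is maximal in $\mathcal{S}(F)$, and $\mathrm{C}(F)\in\mathcal{S}(F)$, so the inclusion $T\subseteq\mathrm{C}(F)$ forces $T=\mathrm{C}(F)$. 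I expect the only delicate point to be the bookkeeping between the two parities, in particular the separate treatment of $x=\frac{F}{2}$ in the even case of the maximality step, rather than any genuine conceptual obstacle.
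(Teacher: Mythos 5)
The paper offers no proof of Proposition~\ref{prop11} to compare against: it is imported verbatim as Proposition~2.7 of \cite{forum}. Your argument is therefore a self-contained substitute, and it is essentially correct. It relies only on material the paper itself makes available, namely the characterization (recalled in the introduction, from \cite{pacific}) that a numerical semigroup is irreducible if and only if it is maximal in $\mathcal{S}\big(\mathrm{F}(S)\big)$. The three steps are sound: direct verification that the displayed set is a numerical semigroup with Frobenius number $F$; maximality in $\mathcal{S}(F)$ via the dichotomy $x<\frac{F}{2}$ (use $F-x\in S$) versus $x=\frac{F}{2}$ (use $F=2x$), which correctly handles both parities; and uniqueness by showing any competitor $T$ satisfies $T\subseteq\mathrm{C}(F)$ and then invoking maximality of $T$.

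One blemish should be repaired. Your claim that the minimal generators of $\mathrm{C}(F)$ are \emph{exactly} the integers in $\big(\frac{F}{2},F\big)$, plus $F+1$ when $F$ is even, is false for small $F$: for instance $\mathrm{C}(4)=\{0,3,5,6,7,\ldots\}=\langle 3,5,7\rangle$ has the minimal generator $7>F+1$, and $\mathrm{C}(3)=\{0,2,4,5,\ldots\}=\langle 2,5\rangle$ has the minimal generator $5>F$. The point is that an element above $F+1$ can fail to be a sum of two nonzero elements because the needed summand is the missing element $F$. This does not threaten the proposition, since what you actually need is only that \emph{every} minimal generator exceeds $\frac{F}{2}$, and that is immediate: minimal generators are nonzero elements of $\mathrm{C}(F)$, and by construction every nonzero element of $\mathrm{C}(F)$ exceeds $\frac{F}{2}$. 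Replacing your ``exactly'' sentence by that one-line observation closes the gap; the rest of the proof stands as written.
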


Let us define the graph $\mathrm{G}(\mathcal{I}(F))$ as follow: $\mathcal{I}(F)$ is the set of vertices and $(T,S)\in \mathcal{I}(F) \times \mathcal{I}(F)$ is an edge if $\mathrm{m}(T)<\frac{F}{2}$ and $S=(T\setminus \{\mathrm{m}(T) \}) \cup \{F-\mathrm{m}(T) \}$.

The next result is Theorem~2.9 of \cite{forum} and the base of the algorithmic process described in \cite{forum} to compute $\mathcal{I}(F)$. \big(Let us recall that a graph $G$ is a \emph{tree} if there exists a vertex $r$ (the so-called \emph{root} of $G$) such that, for any other vertex $v$ of $G$, there exists a unique path connecting $v$ and $r$. In addition, if $(v,w)$ is an edge of the tree, then we say that $v$ is a \emph{child} of $w$.\big)

\begin{theorem}\label{thm12}
	Let $F$ be a positive integer. Then the graph $\mathrm{G}(\mathcal{I}(F))$ is a tree with root $\mathrm{C}(F)$. Moreover, if $S\in \mathcal{I}(F)$, then the set formed by the children of $S$ in $\mathrm{G}(\mathcal{I}(F))$ is
	$$\left\{ \big(S\setminus \{x\}\big) \cup \{F-x\} \;\Bigg\vert\; \begin{array}{c}
	x\in\mathrm{msg}(S), \;\frac{F}{2}<x<F, \; 2x-F\notin S, \\[3pt] 3x\not=2F, \; 4x\not=3F \mbox{ and }\, F-x<\mathrm{m}(S)	\end{array}  \right\}.$$
\end{theorem}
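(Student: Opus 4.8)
The plan is to establish that the graph $\mathrm{G}(\mathcal{I}(F))$ is a tree rooted at $\mathrm{C}(F)$ by showing two things: every vertex other than the root has a well-defined path upward toward $\mathrm{C}(F)$ that strictly increases some monotone quantity, and that this path is unique. The natural quantity to track is the multiplicity $\mathrm{m}(S)$, or more robustly the number of small generators (those below $\frac{F}{2}$). Indeed, the edge relation as defined only fires when $\mathrm{m}(T)<\frac{F}{2}$, replacing the minimal generator $\mathrm{m}(T)$ by $F-\mathrm{m}(T)>\frac{F}{2}$. So first I would verify that this operation, applied to an irreducible $T\in\mathcal{I}(F)$ with $\mathrm{m}(T)<\frac{F}{2}$, genuinely produces another element $S$ of $\mathcal{I}(F)$: the set $S=(T\setminus\{\mathrm{m}(T)\})\cup\{F-\mathrm{m}(T)\}$ is again a numerical semigroup with Frobenius number $F$, and it remains irreducible. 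Here I would lean on Lemma~\ref{lem9} and the symmetry/pseudo-symmetry characterization of irreducibility, checking that removing the smallest generator and inserting its ``mirror image'' $F-\mathrm{m}(T)$ preserves the symmetric gap structure around $\frac{F}{2}$.

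Next I would prove the \emph{tree} structure. Define on each vertex $S\neq\mathrm{C}(F)$ the parent by exactly the reverse of the edge operation: one locates a suitable $x>\frac{F}{2}$ among the generators and replaces it by $F-x<\frac{F}{2}$, lowering the multiplicity. The uniqueness of the upward path follows once I show the parent is \emph{uniquely determined}, which amounts to showing that the edge relation, read backward, is a function. The key point is that $\mathrm{C}(F)$ is the unique irreducible semigroup in $\mathcal{S}(F)$ all of whose minimal generators exceed $\frac{F}{2}$ (Proposition~\ref{prop11}); every other $S\in\mathcal{I}(F)$ must therefore possess at least one generator below $\frac{F}{2}$, giving a nontrivial parent, and iterating strictly increases $\mathrm{m}$ until we reach $\mathrm{C}(F)$. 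Since the multiplicity is bounded above by $\frac{F}{2}$ (for irreducible semigroups other than the root) and strictly increases along each upward step, the process terminates at $\mathrm{C}(F)$ in finitely many steps, establishing the acyclic, connected, rooted structure.

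The heart of the theorem is the explicit description of the children of a given $S\in\mathcal{I}(F)$, so the main work is to characterize exactly which $x\in\mathrm{msg}(S)$ with $\frac{F}{2}<x<F$ yield a valid child $T=(S\setminus\{x\})\cup\{F-x\}$. I would proceed by determining precisely when $T$ is (i) a numerical semigroup, (ii) irreducible with Frobenius number $F$, and (iii) actually a child, i.e. $\mathrm{m}(T)=F-x<\mathrm{m}(S)$ and the edge points back to $S$. Condition $F-x<\mathrm{m}(S)$ guarantees that $F-x$ becomes the new multiplicity and that removing it recovers $S$, so that $S$ is genuinely the parent of $T$. The remaining side conditions $2x-F\notin S$, $3x\neq 2F$, and $4x\neq 3F$ are exactly the constraints ensuring that $T$ stays a well-defined irreducible numerical semigroup: the condition $2x-F\notin S$ prevents the substitution from destroying the semigroup closure (one must not reintroduce a forbidden small element), while $3x\neq 2F$ and $4x\neq 3F$ rule out the degenerate cases where the symmetry of $T$ around $\frac{F}{2}$ would break (these correspond to $x$ being, respectively, a gap forced by the pseudo-symmetric pattern at $\frac{2F}{3}$ or $\frac{3F}{4}$).

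The main obstacle I anticipate is the careful case analysis underlying condition (ii), especially splitting on the parity of $F$ and verifying that the modified set $T$ satisfies the numerical criterion for irreducibility (symmetry when $F$ is odd, pseudo-symmetry when $F$ is even). In particular, pinning down why exactly the three arithmetic exclusions $2x-F\notin S$, $3x\neq 2F$, $4x\neq 3F$ are \emph{necessary and sufficient} — neither redundant nor incomplete — will require a delicate examination of how the gaps of $T$ pair up under $n\mapsto F-n$, and of the special role played by $\frac{F}{2}$ in the pseudo-symmetric case. This verification, rather than the tree structure itself, is where the real content lies.
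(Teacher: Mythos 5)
Your overall architecture matches that of the proof of this result in \cite{forum} (the paper under review only quotes the theorem, so the comparison is with the cited proof): show the edge operation preserves membership in $\mathcal{I}(F)$, get the tree structure from uniqueness of the out-edge plus strict growth of the multiplicity toward the root $\mathrm{C}(F)$ (via Proposition~\ref{prop11}), and then characterize the children. But there are two concrete problems. First, your description of the parent map is backwards: you say the parent of $S$ is obtained by ``locating a suitable $x>\frac{F}{2}$ among the generators and replacing it by $F-x<\frac{F}{2}$, lowering the multiplicity,'' whereas by the definition of the edges the parent of $T$ is $(T\setminus\{\mathrm{m}(T)\})\cup\{F-\mathrm{m}(T)\}$, which \emph{raises} the multiplicity; the operation you describe is passage to a \emph{child}. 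You then assert, correctly, that multiplicity increases on the way up, so the paragraph contradicts itself. The repair is easy, since uniqueness of the out-edge is immediate (it is determined by $\mathrm{m}(T)$); what genuinely needs proof is that $(T\setminus\{\mathrm{m}(T)\})\cup\{F-\mathrm{m}(T)\}$ lies in $\mathcal{I}(F)$, and your idea of invoking Lemma~\ref{lem9} (applied to the numerical semigroup $T\setminus\{\mathrm{m}(T)\}$) together with symmetry/pseudo-symmetry does work for that step.

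Second, and decisively: the characterization of the children --- which you yourself call the heart of the theorem --- is never proved; you only list what would have to be checked and defer the ``delicate examination.'' Worse, your reading of the side conditions points in a wrong direction: $3x\neq 2F$ and $4x\neq 3F$ are not about ``the symmetry of $T$ around $\frac{F}{2}$ breaking''; all three arithmetic conditions are additive-closure conditions on the new smallest element $F-x$. If $3x=2F$ then $(F-x)+(F-x)=x\notin T$; if $4x=3F$ then $(F-x)+(F-x)=\frac{F}{2}$, which can lie in no semigroup with Frobenius number $F$; and $2x-F\notin S$ is exactly what guarantees (using symmetry/pseudo-symmetry of $S$) that $2F-2x\in S$ and that $(F-x)+b\neq x$ for $b\in S$. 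The actual proof is a finite but essential case analysis: for sufficiency, check closure of $T=(S\setminus\{x\})\cup\{F-x\}$ in the three cases $a,b\in S\setminus\{x\}$ (uses $x\in\mathrm{msg}(S)$), $a=F-x,\ b\in S\setminus\{x\}$ (uses symmetry of $S$, $x\in\mathrm{msg}(S)$, and $2x-F\notin S$), and $a=b=F-x$ (uses $2x-F\notin S$, $3x\neq 2F$, $4x\neq 3F$); then verify that $T$ is again symmetric or pseudo-symmetric with Frobenius number $F$, and that $F-x<\mathrm{m}(S)$ forces $\mathrm{m}(T)=F-x$, so the unique edge out of $T$ returns to $S$. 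For necessity, each condition is refuted by exhibiting a concrete closure violation or by showing the edge out of $T$ does not return to $S$. None of this appears in your proposal, so as it stands it is an outline of the right strategy rather than a proof.
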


The above theorem allows us to recurrently build, beginning from the root and adding the children of the known vertices, the graph $\mathrm{G}(\mathcal{I}(F))$ and, consequently, the set $\mathcal{I}(F)$. The following one is the Example~2.10 of \cite{forum}, in which it is computed $\mathrm{G}(\mathcal{I}(11))$.

\begin{example}\label{exmpl13}
	\mbox{ } \smallskip
	
		\begin{center}
		\begin{picture}(190,65)
		%\put(0,0){A} \put(0,65){B} \put(188,0){C} \put(188,65){D}
		\put(58,62){$\langle 6,7,8,9,10 \rangle$}
		\put(22,42){\vector(3,1){45}} \put(85,42){\vector(0,1){15}} \put(150,42){\vector(-3,1){45}}
		\put(8,32){$\langle 3,7 \rangle$}  \put(68,32){$\langle 4,6,9 \rangle$} \put(143,32){$\langle 5,7,8,9 \rangle$}
		\put(85,12){\vector(0,1){15}} \put(165,12){\vector(0,1){15}}
		\put(73,2){$\langle 2,13 \rangle$} \put(153,2){$\langle 4,5 \rangle$}
		\end{picture}
	\end{center}
\end{example}

In Section~3 of \cite{forum} it is shown how we can build the tree $\mathrm{G}(\mathcal{I}(F))$ using the so-called \emph{Kunz-coordinates vector} of a numerical semigroup. In essence, the idea of this type of coordinates is that each element $S\in\mathcal{S}(F)$ is expressed by an $F$-tupla $(x_1,\ldots,x_F)\in\{0,1\}^F$, where $x_i=0$ if and only if $i\in S$. In this way, given the $F$-tupla associated to a numerical semigroup $S\in\mathcal{S}(F)$, in Section~3 of \cite{forum} it is explained how we can build the $F$-tuplas associated to the children of $S$ in the tree $\mathrm{G}(\mathcal{I}(F))$. For instance, as continuation of Example~\ref{exmpl13}, the next one is Example~3.6 of \cite{forum}, where $\mathrm{G}(\mathcal{I}(11))$ is computed by depicting each element of $\mathcal{I}(11)$ by an $11$-tupla.
	
\begin{example}\label{exmpl14}
	\mbox{ } \smallskip
	
	\begin{center}
		\begin{picture}(340,65)
		%\put(0,0){A} \put(0,65){B} \put(338,0){C} \put(338,65){D}
		\put(120,62){$(1,1,1,1,1,0,0,0,0,0,1)$}
		\put(105,42){\vector(2,1){30}} \put(173.5,42){\vector(0,1){15}} \put(243,42){\vector(-2,1){30}}
		\put(8,32){$(1,1,0,1,1,0,0,1,0,0,1)$}  \put(120,32){$(1,1,1,0,1,0,1,0,0,0,1)$} \put(232,32){$(1,1,1,1,0,1,0,0,0,0,1)$}
		\put(173.5,12){\vector(0,1){15}} \put(285.5,12){\vector(0,1){15}}
		\put(120,2){$(1,0,1,0,1,0,1,0,1,0,1)$} \put(232,2){$(1,1,1,0,0,1,1,0,0,0,1)$}
		\end{picture}
	\end{center}
\end{example}

The following algorithm allows us to build, from two given positive integers $g_1$ and $g_2$ (in the conditions of Proposition~\ref{prop4}), the set formed by all numerical semigroups $S$ such that $\mathrm{E}(S)=\{g_1,g_2\}$.

If $(a_1,\ldots,a_n),(b_1,\ldots,b_n)\in \mathbb{N}^n$, then we denote by
$$(a_1,\ldots,a_n)\vee (b_1,\ldots,b_n) =\big(\max\{a_1,b_1\},\ldots,\max\{a_n,b_n\} \big).$$
Moreover, if $C \subseteq {\mathbb N}^n$, then we denote by $\mathrm{minimals}(C)$ the set of minimals elements in $C$ with respect the usual component-wise order. 

\begin{algorithm}\label{alg15}
	\mbox{ }
	
	\noindent\textit{INPUT}: $g_1$, $g_2$ (positive integers fulfilling the conditions of Proposition~\ref{prop4}).
	
	\noindent\textit{OUTPUT}: The set $\big\{ S \mid S \mbox{ is a numerical semigroup and } \mathrm{E}(S)=\{g_1,g_2\} \big\}$.
	
	\vspace{-5pt}
	\begin{enumerate}
		\item[\textit{1.}] Compute $\mathcal{I}(g_1)$ using the algorithmic method shown in \cite{forum} and the Kunz-coordinates vector. \big(Thus, we express each element of $\mathcal{I}(g_1)$ by an element of $\{0,1\}^{g_1}$.\big)
		\vspace{-3pt}
		\item[\textit{2.}] Compute $\mathcal{I}(g_2)$ using the algorithmic method shown in \cite{forum} and the Kunz-coordinates vector. \big(Thus, we express each element of $\mathcal{I}(g_2)$ by an element of $\{0,1\}^{g_2}$.\big)
		\vspace{-3pt}
		\item[\textit{3.}] Set $A=\big\{ (x_1,\ldots,x_{g_1},0,\ldots,0)\in\{0,1\}^{g_2} \;\big\vert\; (x_1,\ldots,x_{g_1})\in \mathcal{I}(g_1) \big\}$, \par 
		$B_0=\big\{ (x_1,\ldots,x_{g_2})\in\mathcal{I}(g_2) \;\big\vert\; x_{g_1}=0 \big\}$, \par and $B_1=\big\{ (x_1,\ldots,x_{g_2})\in\mathcal{I}(g_2) \;\big\vert\; x_{g_1}=1 \big\}$.
		\vspace{-3pt}
		\item[\textit{4.}] Compute $C=\mathrm{minimals}\left\{a\vee b \mid a\in A \mbox{ and } b\in B_0 \right\}$.
		\vspace{-3pt}
		\item[\textit{5.}] Return $D=\left\{c\in C \mid b \vee c \not= c \mbox{ for all } b \in B_1 \right\}$.
	\end{enumerate}
\end{algorithm}

In the next example we see how apply the above algorithm.

\begin{example}\label{exmpl15}
	Let us compute all numerical semigroups $S$ with $\mathrm{E}(S)=\{8,11\}$. First of all, let us observe that, by applying Proposition~\ref{prop4}, such numerical semigroups exist. Now, we have that (see Example~\ref{exmpl14})
	\begin{itemize}
		\item $\mathcal{I}(8)=\left\{(1,1,1,1,0,0,0,1),(1,1,0,1,1,0,0,1) \right\}$.
		\item $\mathcal{I}(11)=\left\{(1,1,1,1,1,0,0,0,0,0,1),(1,1,0,1,1,0,0,1,0,0,1),\right.$\par
		$\hspace{1.45cm}\left.(1,1,1,0,1,0,1,0,0,0,1),(1,1,1,1,0,1,0,0,0,0,1),\right.$\par
		$\hspace{1.45cm}\left.(1,0,1,0,1,0,1,0,1,0,1),(1,1,1,0,0,1,1,0,0,0,1)\right\}$.
	\end{itemize}
	Thereby,
	\begin{itemize}
		\item $A=\{a_1,a_2\} = \left\{(1,1,1,1,0,0,0,1,0,0,0),(1,1,0,1,1,0,0,1,0,0,0) \right\}$.
		\item $B_0=\{b_1,b_3,b_4,b_5,b_6\} = \left\{(1,1,1,1,1,0,0,0,0,0,1),\right.$\par
		$\hspace{2.5cm}\left.(1,1,1,0,1,0,1,0,0,0,1),(1,1,1,1,0,1,0,0,0,0,1),\right.$\par
		$\hspace{2.5cm}\left.(1,0,1,0,1,0,1,0,1,0,1),(1,1,1,0,0,1,1,0,0,0,1)\right\}$.
		\item $B_1=\{b_2\} = \left\{(1,1,0,1,1,0,0,1,0,0,1)\right\}$.
	\end{itemize}
	Thus,
	\begin{itemize}
		\item $C=\mathrm{minimals}\left\{(1,1,1,1,1,0,0,1,0,0,1),(1,1,1,1,1,0,1,1,0,0,1),\right.$\par
		$\hspace{2.35cm}\left.(1,1,1,1,0,1,0,1,0,0,1),(1,1,1,1,1,0,1,1,1,0,1),\right.$\par
		$\hspace{2.35cm}\left.(1,1,1,1,0,1,1,1,0,0,1),(1,1,1,1,1,1,0,1,0,0,1),\right.$\par
		$\hspace{2.35cm}\left.(1,1,1,1,1,1,1,1,0,0,1)\right\} \Rightarrow$\par
		$C=\{c_1,c_2\}=\left\{(1,1,1,1,1,0,0,1,0,0,1),(1,1,1,1,0,1,0,1,0,0,1)\right\}$.
	\end{itemize}
	Lastly, since $b_2\leq c_1$ (and, consequently, $b_2\vee c_1=c_1$), we can conclude that $D=\left\{(1,1,1,1,0,1,0,1,0,0,1)\right\}$. Therefore, $\{0,5,7,9,10,12,\to\}=\langle5,7,9,13\rangle$ is the unique numerical semigroup $S$ with $\mathrm{E}(S)=\{8,11\}$.
	
\end{example}

Finally, fixed a positive integer $F$, the next algorithm allows us to compute the set $\mathcal{A}(F)$ formed by all atomic numerical semigroups with Frobenius number equal to $F$.

\begin{algorithm}\label{alg16}
	\mbox{ }
	
	\noindent\textit{INPUT}: a positive integer $F$.
	
	\noindent\textit{OUTPUT}: $\mathcal{A}(F)$.
	
	\vspace{-5pt}
	\begin{enumerate}
		\item[\textit{1.}] Compute $\mathcal{I}(F)$ using the Kunz-coordinates vector.
		\vspace{-3pt}
		\item[\textit{2.}] Compute $\mathrm{L}(F)=\left\{ l \in \mathbb{N} \;\big\vert\; \frac{F}{2}<l<F \mbox{ and } \big(F-l\mid F \mbox{ or } 2l-F\nmid F\big) \right\}$.
		\vspace{-3pt}
		\item[\textit{3.}] For each $l\in\mathrm{L}(F)$, and using Algorithm~\ref{alg15}, compute the set $\mathcal{N}(F,l) = \big\{ S \mid S \mbox{ is a numerical semigroup with } \mathrm{E}(S)=\{l,F\} \big\}$.
		\vspace{-3pt}
		\item[\textit{4.}] Return $\mathcal{I}(F) \cup \mathcal{N}(F) = \mathcal{I}(F) \cup \Big(\bigcup_{l\in \mathrm{L}(F)} \mathcal{N}(F,l)\Big)$.
	\end{enumerate}
\end{algorithm}

Let us see how this algorithm works in practice.

\begin{example}\label{exmpl16}
	To compute $\mathcal{A}(11)$, first we observe that, from Example~\ref{exmpl14},
	\begin{itemize}
		\item $\mathcal{I}(11)=\left\{(1,1,1,1,1,0,0,0,0,0,1),(1,1,0,1,1,0,0,1,0,0,1),\right.$\par
		$\hspace{1.45cm}\left.(1,1,1,0,1,0,1,0,0,0,1),(1,1,1,1,0,1,0,0,0,0,1),\right.$\par
		$\hspace{1.45cm}\left.(1,0,1,0,1,0,1,0,1,0,1),(1,1,1,0,0,1,1,0,0,0,1)\right\}$.
	\end{itemize}
	Now, from Proposition~\ref{prop4}, we can assert that $\mathrm{L}(11)=\left\{7,8,9,10\right\}$ is the set of positive integers $n$ such that there exists a numerical semigroup $S$ such that $\mathrm{E}(S)=\{n,11\}$. Then, by applying Algorithm~\ref{alg15}, we have that
	\begin{itemize}
		\item $\mathcal{N}(11,7) = \left\{ (1,1,0,1,1,0,1,1,0,0,1) \right\}$,
		\item $\mathcal{N}(11,8) = \left\{ (1,1,1,1,0,1,0,1,0,0,1) \right\}$,
		\item $\mathcal{N}(11,9) = \left\{ (1,1,1,1,1,0,0,0,1,0,1), (1,1,1,1,0,1,0,0,1,0,1) \right\}$,
		\item $\mathcal{N}(11,10) = \left\{ (1,1,1,1,1,0,0,0,0,1,1), (1,1,1,0,1,1,1,0,0,1,1),\right.$\par
		$\hspace{2.1cm}\left. (1,1,0,1,1,0,1,1,0,1,1) \right\}$.
	\end{itemize}
	Therefore, by using the minimal system of generators for a numerical semigroup,
	\begin{itemize}
		\item $\mathcal{I}(11) = \left\{\langle 6,7,8,9,10 \rangle, \langle 3,7 \rangle, \langle 4,6,9 \rangle, \langle 5,7,8,9 \rangle, \langle 2,13 \rangle, \langle 4,5 \rangle \right\}$,
		\item $\mathcal{N}(11) = \left\{\langle 3,10,14 \rangle, \langle 5,7,9,13 \rangle, \langle 6,7,8,10 \rangle, \langle 5,7,8 \rangle, \right.$\par
		$\hspace{1.6cm}\left. \langle 6,7,8,9 \rangle, \langle 4,9,14,15 \rangle, \langle 3,13,14 \rangle \right\}$.
	\end{itemize}
 	And finally,
 		$$\mathcal{A}(11) = \left\{\langle 6,7,8,9,10 \rangle, \langle 3,7 \rangle, \langle 4,6,9 \rangle, \langle 5,7,8,9 \rangle, \langle 2,13 \rangle, \langle 4,5 \rangle, \langle 3,10,14 \rangle,  \right.$$
 		$$\hspace{1.9cm}\left. \langle 5,7,9,13 \rangle, \langle 6,7,8,10 \rangle, \langle 5,7,8 \rangle,\langle 6,7,8,9 \rangle, \langle 4,9,14,15 \rangle, \langle 3,13,14 \rangle \right\}\!.$$
\end{example}

%\section*{Acknowledgement}

\end{document}